\newcommand{\vertiii}[1]{{\left\vert\kern-0.25ex\left\vert\kern-0.25ex\left\vert #1 
    \right\vert\kern-0.25ex\right\vert\kern-0.25ex\right\vert}}
\newtheorem{thm}{Theorem}
\newtheorem{lemma}[thm]{Lemma}
\newtheorem{conjecture}[thm]{Conjecture}
\theoremstyle{definition}
\theoremstyle{remark}
\numberwithin{equation}{section}
\numberwithin{thm}{section}
\DeclareMathAlphabet{\mathsfsl}{OT1}{cmss}{m}{sl}
\renewcommand{\phi}{\varphi}
\newcommand{\Expect}{\operatorname{\mathbb{E}}}
\def\Expect{\mathbb{E}}
\def\bA{\mathbf{A}}
\def\bD{\mathbf{D}}
\def\bB{\mathbf{B}}
\def\bC{\mathbf{C}}
\def\bH{\mathbf{H}}
\def\b0{\mathbf{0}}
\def\bI{\mathbf{I}}
\journal{ }
\begin{document}

\begin{frontmatter}



\title{A note on the matrix arithmetic-geometric mean inequality}


\author{Teng Zhang}

\address{Department of Mathematics, University of Central Florida, Orlando, FL 32654, USA}

\begin{abstract}
This note proves the following inequality: If $n=3k$ for some positive integer $k$, then for any $n$ positive definite matrices $\bA_1,\bA_2,\cdots,\bA_n$, the following inequality holds:
\begin{equation}\label{eq:main}
\frac{1}{n^3}\Big\|\sum_{j_1,j_2,j_3=1}^{n}\bA_{j_1}\bA_{j_2}\bA_{j_3}\Big\|
\geq \frac{(n-3)!}{n!} \Big\|\sum_{\substack{j_1,j_2,j_3=1,\\\text{$j_1$, $j_2$, $j_3$ all distinct}}}^{n}\bA_{j_1}\bA_{j_2}\bA_{j_3}\Big\|,
\end{equation}
where $\|\cdot\|$ represents the operator norm. This inequality is a special case of a recent conjecture proposed by Recht and R\'{e}~\cite{Recht12beneath}.
\end{abstract}

\begin{keyword}
Positive definite matrices \sep Matrix inequalities

\MSC 15A42  \sep 15A60  \sep 47A30

\end{keyword}

\end{frontmatter}


\section{Introduction}
The classical arithmetic-geometric mean inequality asserts that when $a_1, a_2, \cdots, a_n$ are positive numbers, then  $\frac{1}{n}(\sum_{i=1}^n a_i)\geq (\prod_{i=1}^n a_i)^{\frac{1}{n}}$. This inequality has been generalized from the case of $n$ positive scalars to the case of $n$ positive-definite matrices in various works (see~\cite{Bhatia1990,Bhatia1993,Bhatia2000,Lawson2001,Lawson2011}).

Recently, Recht and R\'{e}~\cite{Recht12beneath} conjectured another generalization of the arithmetic-geometric mean inequality, which is formulated differently from the inequalities proved in previous works. The conjecture is as follows: For $n$ positive definite matrices $\{\bA_i\}_{i=1}^n$, the following inequality holds:
\begin{equation}\label{eq:conjecture}
\frac{1}{n^m}\Big\|\sum_{j_1,\ldots,j_m=1}^{n}\bA_{j_1}\bA_{j_2}\cdots\bA_{j_m}\Big\|
\geq \frac{(n-m)!}{n!} \Big\|\sum_{\substack{j_1,\ldots,j_m=1,\\\text{$j_1, \ldots, j_m$ all distinct}}}^{n}\bA_{j_1}\bA_{j_2}\cdots\bA_{j_m}\Big\|.
\end{equation}
Here $\|\cdot\|$ represents the operator norm, i.e., the largest singular value. Recht and R\'{e} showed that, if this inequality holds, then the randomized coordinate descent algorithm without replacement sampling converges faster than the version with replacement sampling for both least mean squares and randomized Kaczmarz algorithms.

While the conjecture has been proved in \cite[Proposition 3.2]{Recht12beneath} for the special case $n=m=2$, to the best of our knowledge, the conjecture for other cases remains open. 

Inspired by the analysis of randomized algorithm, a very similar conjecture was proposed in \cite{Duchi2012}, which is formulated as
\[
\frac{1}{n^m}\sum_{j_1,\ldots,j_m=1}^{n}\Big\|\bA_{j_1}\bA_{j_2}\cdots\bA_{j_m}\Big\|
\geq \frac{(n-m)!}{n!}\sum_{\substack{j_1,\ldots,j_m=1,\\\text{$j_1, \ldots, j_m$ all distinct}}}^{n}\Big\|\bA_{j_1}\bA_{j_2}\cdots\bA_{j_m}\Big\|.
\]
For this conjecture and the case $m=3$ has been proved recently in \cite{Israel2014}. However, the conjecture is different from \eqref{eq:conjecture} since both LHS and RHS are the sum of operator norms, and the technique used in the proof is not extendable to the proof of \eqref{eq:conjecture}.

The main contribution of this note is a proof of the conjecture~\eqref{eq:conjecture} when $m=3$ and $n=3k$ for any integer $k>1$. The rest of the paper is organized as follows. In Section~\ref{sec:nm3}, we prove the conjecture for $n=m=3$. Section~\ref{sec:generalization} generalizes the result to the case $m=3$ and $n=3k$. Section~\ref{sec:conjecture} discusses other settings such as $m=4$ and some interesting open problems related to the conjecture. 


\section{The proof of the conjecture when $n,m=3$}\label{sec:nm3}

\subsection{Reduction of the conjecture}
In the note, we write  $\bA\geq \bB$ or $\bB\leq \bA$ if $\bA-\bB$ is positive semidefinite. To prove \eqref{eq:main}, WLOG we may assume 
$
\|\sum_{j=1}^{n}\bA_{j}\|=1,
$
which implies
\begin{equation}\label{eq:bound_I}
\sum_{j=1}^{n}\bA_{j} \leq \bI.
\end{equation}

Then the LHS of \eqref{eq:conjecture} is $1/n^m$ and it suffices to prove
\begin{equation}\label{eq:main_alternate}
-\frac{1}{n^m}\bI \leq \Expect [ \bA_{i_1}\bA_{i_2}\cdots\bA_{i_m} ] \leq \frac{1}{n^m}\bI,
\end{equation}
where $\Expect [ \bA_{i_1}\bA_{i_2}\cdots\bA_{i_m} ]$ represents the expected value of $\bA_{i_1}\bA_{i_2}\cdots\bA_{i_m}$ in the probability space that the $n$-tuple $\{i_1,i_2,\cdots,i_n\}$ is sampled randomly from all permutations of $\{1,2,\cdots, n\}$.

\subsection{Proof of the upper bound}\label{sec:upper}
In this section we will prove the second inequality in \eqref{eq:main_alternate} when $n,m=3$, i.e., the upper bound of $\Expect [ \bA_{i_1}\bA_{i_2}\cdots\bA_{i_m} ]$. The proof is based on the following lemmas, and their proofs are deferred to Sections~\ref{sec:lemma1} and~\ref{sec:lemma2}.
\begin{lemma}\label{lemma:lemma1}
For symmetric matrices $\bA$, $\bB\in \mathbb{R}^{p\times p}$, and any positive semidefinite matrix $\bC\in \mathbb{R}^{p\times p}$, \[
\bA\bC\bA+\bB\bC\bB\geq \bA\bC\bB+\bB\bC\bA.\]
\end{lemma}
\begin{lemma}\label{lemma:lemma2}
If $\bA$, $\bB$ and $\bC$ are symmetric matrices of the same size and $\bA\leq \bB$, then \[
\bC\bA\bC\leq \bC\bB\bC.\]
\end{lemma}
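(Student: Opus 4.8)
The plan is to reduce the statement to the case $\bC$ invertible by a standard perturbation-and-limit argument, and then to observe that conjugation by an invertible matrix preserves the positive semidefinite order. More precisely, suppose first that $\bC$ is invertible (hence $\bC = \bC^*$ is a genuine congruence). Since $\bA \le \bB$, the matrix $\bB - \bA$ is positive semidefinite, so we may write $\bB - \bA = \bD^* \bD$ for some matrix $\bD$. Then for every vector $\bx$,
\[
\bx^* \bC(\bB-\bA)\bC\,\bx = (\bC\bx)^* (\bB-\bA)(\bC\bx) = \norm{\bD\bC\bx}^2 \ge 0,
\]
which is exactly $\bC\bA\bC \le \bC\bB\bC$. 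Note this half of the argument does not even need $\bC$ invertible — congruence $\bx \mapsto \bC\bx$ always maps the PSD cone into itself — so in fact the lemma holds verbatim with no invertibility hypothesis.

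Writing it out cleanly: the key identity is $\bC\bB\bC - \bC\bA\bC = \bC(\bB-\bA)\bC$, valid because $\bA$, $\bB$, $\bC$ are symmetric (so no transposes are lost). Since $\bB - \bA \psdge \zeromtx$, for any $\bx$ we have $\bx^*\bC(\bB-\bA)\bC\bx = (\bC\bx)^*(\bB-\bA)(\bC\bx) \ge 0$, using again that $\bC$ is symmetric so that $(\bC\bx)^* = \bx^*\bC$. Hence $\bC(\bB-\bA)\bC \psdge \zeromtx$, i.e.\ $\bC\bA\bC \le \bC\bB\bC$.

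There is essentially no obstacle here: the whole content is the congruence-invariance of the PSD cone, and the symmetry hypotheses on $\bA$, $\bB$, $\bC$ are present precisely so that the one displayed identity requires no adjoint bookkeeping. The only thing to be slightly careful about is to phrase the final line as a statement about the quadratic form $\bx^* M \bx$ for all $\bx$, rather than invoking any factorization of $\bB - \bA$, since the former is the cleanest route and avoids choosing a square root. I would present the three-line computation above and stop.
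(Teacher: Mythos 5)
Your proof is correct and is essentially the same argument as the paper's: both reduce to the identity $\bC\bB\bC-\bC\bA\bC=\bC(\bB-\bA)\bC$ and the congruence-invariance of the positive semidefinite cone, the only cosmetic difference being that the paper factors $\bB-\bA=\bH\bH^T$ and exhibits $(\bC\bH)(\bC\bH)^T$ as a Gram matrix, while you check the quadratic form $\bx^*\bC(\bB-\bA)\bC\bx\geq 0$ directly. The opening detour about reducing to invertible $\bC$ is unnecessary, as you yourself note, and could simply be deleted.
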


To prove the upper bound of $\Expect [ \bA_{i_1}\bA_{i_2}\bA_{i_3} ]$, first we apply Lemma~\ref{lemma:lemma1} and obtain
\begin{equation}\label{eq:lemma1_cor}
\bA_{i_1}\bA_{i_2}\bA_{i_3}+\bA_{i_3}\bA_{i_2}\bA_{i_1}
\leq \bA_{i_1}\bA_{i_2}\bA_{i_1}+\bA_{i_3}\bA_{i_2}\bA_{i_3}.
\end{equation}
Applying \eqref{eq:lemma1_cor}, we have
\begin{align}
&\Expect [ \bA_{i_1}\bA_{i_2}\bA_{i_3} ]
= \frac{1}{2}\,\Expect [ \bA_{i_1}\bA_{i_2}\bA_{i_3} + \bA_{i_3}\bA_{i_2}\bA_{i_1}]
\nonumber\\\leq& \frac{1}{4}\,\Expect [ \bA_{i_1}\bA_{i_2}\bA_{i_3} + \bA_{i_3}\bA_{i_2}\bA_{i_1}+\bA_{i_1}\bA_{i_2}\bA_{i_1}+\bA_{i_3}\bA_{i_2}\bA_{i_3}]
\nonumber\\=&\frac{1}{4}\,\Expect [(\bA_{i_1}+\bA_{i_3})\bA_{i_2}(\bA_{i_1}+\bA_{i_3})],
\label{eq:upper1}\end{align}
where the inequality is obtained by taking the expectation to both LHS and RHS of \eqref{eq:lemma1_cor}. 

On the other hand, since $\bA_{i_1}+\bA_{i_3}\leq \bI-\bA_{i_2}$ (which follows from \eqref{eq:bound_I}),  Lemma~\ref{lemma:lemma2} implies \begin{equation}\label{eq:upper2}
(\bA_{i_1}+\bA_{i_3})\bA_{i_2}(\bA_{i_1}+\bA_{i_3})
\leq (\bA_{i_1}+\bA_{i_3})\big(\bI-(\bA_{i_1}+\bA_{i_3})\big)(\bA_{i_1}+\bA_{i_3}).\end{equation}
Since $\bA_{i_1}+\bA_{i_3}$ and $\bI-(\bA_{i_1}+\bA_{i_3})$ are simultaneously  diagonalizable and their eigenvalues are between $0$ and $1$, applying $\max_{0\leq a\leq 1}a^2(1-a)= 4/27$ we have
\begin{equation}
(\bA_{i_1}+\bA_{i_3})\big(\bI-(\bA_{i_1}+\bA_{i_3})\big)(\bA_{i_1}+\bA_{i_3})
\leq \frac{4}{27}\bI.\label{eq:upper3}
\end{equation}
The upper bound of $\Expect [ \bA_{i_1}\bA_{i_2}\bA_{i_3} ]$ in \eqref{eq:main_alternate} is then proved by combining \eqref{eq:upper1}, \eqref{eq:upper2}, and \eqref{eq:upper3}.
\subsection{Proof of the lower bound}\label{sec:lower}
In this section we will prove the first inequality in \eqref{eq:main_alternate} when $n,m=3$, i.e., the lower bound of $\Expect [ \bA_{i_1}\bA_{i_2}\cdots\bA_{i_m} ]$.

To prove the lower bound of $\Expect [ \bA_{i_1}\bA_{i_2}\bA_{i_3} ]$ in \eqref{eq:main_alternate}, we apply Lemma~\ref{lemma:lemma1} by plugging in $\bA=\bA_{i_1}, \bB=-\bA_{i_3}$, and $\bC=\bA_{i_2}$:
\begin{equation}\label{eq:lower0}
-\bA_{i_1}\bA_{i_2}\bA_{i_3}-\bA_{i_3}\bA_{i_2}\bA_{i_1}
\leq \bA_{i_1}\bA_{i_2}\bA_{i_1}+\bA_{i_3}\bA_{i_2}\bA_{i_3}.
\end{equation}
Similar to \eqref{eq:upper1}, applying \eqref{eq:lower0} we have
\begin{align}
&\Expect [ - \bA_{i_1}\bA_{i_2}\bA_{i_3} ]
= \frac{1}{2}\,\Expect [ - \bA_{i_1}\bA_{i_2}\bA_{i_3} -  \bA_{i_3}\bA_{i_2}\bA_{i_1}]
\nonumber\\\leq& \frac{1}{4}\,\Expect [ - \bA_{i_1}\bA_{i_2}\bA_{i_3} - \bA_{i_3}\bA_{i_2}\bA_{i_1}+\bA_{i_1}\bA_{i_2}\bA_{i_1}+\bA_{i_3}\bA_{i_2}\bA_{i_3}]
\nonumber\\=&\frac{1}{4}\,\Expect [(\bA_{i_1}-\bA_{i_3})\bA_{i_2}(\bA_{i_1}-\bA_{i_3})].
\label{eq:lower1}\end{align}
Therefore, to prove the lower bound in \eqref{eq:main_alternate}, it suffices to show
\begin{equation}
\Expect [(\bA_{i_1}-\bA_{i_3})\bA_{i_2}(\bA_{i_1}-\bA_{i_3})]\leq \frac{4}{27}\bI.
\label{eq:lower2}\end{equation}
To prove \eqref{eq:lower2}, we let $\bD=(\bI-\bA_1-\bA_2-\bA_3)/3$ and $\tilde{\bA}_i=\bA_i+\bD$ for $i=1,2,3$. Then by Lemma~\ref{lemma:lemma2},
\begin{equation}\label{eq:lower3}
\Expect [(\bA_{i_1}-\bA_{i_3})\bA_{i_2}(\bA_{i_1}-\bA_{i_3})]
\leq
\Expect [(\tilde{\bA}_{i_1}-\tilde{\bA}_{i_3})\tilde{\bA}_{i_2}(\tilde{\bA}_{i_1}-\tilde{\bA}_{i_3})].
\end{equation}
Since \[(\tilde{\bA}_{i_1}-\tilde{\bA}_{i_3})\tilde{\bA}_{i_2}(\tilde{\bA}_{i_1}-\tilde{\bA}_{i_3})=2\tilde{\bA}_{i_1}\tilde{\bA}_{i_2}\tilde{\bA}_{i_1}+2\tilde{\bA}_{i_3}\tilde{\bA}_{i_2}\tilde{\bA}_{i_3}-(\tilde{\bA}_{i_1}+\tilde{\bA}_{i_3})\tilde{\bA}_{i_2}(\tilde{\bA}_{i_1}+\tilde{\bA}_{i_3})\]
and
\[
\Expect [2\tilde{\bA}_{i_1}\tilde{\bA}_{i_2}\tilde{\bA}_{i_1}+2\tilde{\bA}_{i_3}\tilde{\bA}_{i_2}\tilde{\bA}_{i_3}]=
\Expect [2\tilde{\bA}_{i_2}(\tilde{\bA}_{i_1}+\tilde{\bA}_{i_3})\tilde{\bA}_{i_2}],\]
we have
\begin{align}
&\Expect [(\tilde{\bA}_{i_1}-\tilde{\bA}_{i_3})\tilde{\bA}_{i_2}(\tilde{\bA}_{i_1}-\tilde{\bA}_{i_3})]
\nonumber\\=&\Expect [2\tilde{\bA}_{i_1}\tilde{\bA}_{i_2}\tilde{\bA}_{i_1}+2\tilde{\bA}_{i_3}\tilde{\bA}_{i_2}\tilde{\bA}_{i_3}-(\tilde{\bA}_{i_1}+\tilde{\bA}_{i_3})\tilde{\bA}_{i_2}(\tilde{\bA}_{i_1}+\tilde{\bA}_{i_3})]
\nonumber\\=&\Expect [2\tilde{\bA}_{i_2}(\tilde{\bA}_{i_1}+\tilde{\bA}_{i_3})\tilde{\bA}_{i_2}-(\tilde{\bA}_{i_1}+\tilde{\bA}_{i_3})\tilde{\bA}_{i_2}(\tilde{\bA}_{i_1}+\tilde{\bA}_{i_3})]\label{eq:lower4}
\end{align}
Applying the fact that $\tilde{\bA}_1+\tilde{\bA}_2+\tilde{\bA}_3=\bI$, we have
\begin{align}
&\Expect [2\tilde{\bA}_{i_2}(\tilde{\bA}_{i_1}+\tilde{\bA}_{i_3})\tilde{\bA}_{i_2}-(\tilde{\bA}_{i_1}+\tilde{\bA}_{i_3})\tilde{\bA}_{i_2}(\tilde{\bA}_{i_1}+\tilde{\bA}_{i_3})]\nonumber\\=&\Expect [2\tilde{\bA}_{i_2}(\bI-\tilde{\bA}_{i_2})\tilde{\bA}_{i_2}-(\bI-\tilde{\bA}_{i_2})\tilde{\bA}_{i_2}(\bI-\tilde{\bA}_{i_2})]
\nonumber\\=&\Expect [-\tilde{\bA}_{i_2}+4\tilde{\bA}_{i_2}^2-3\tilde{\bA}_{i_2}^3] = \frac{4}{27}\bI+
\Expect [-\frac{13}{9}\tilde{\bA}_{i_2}+4\tilde{\bA}_{i_2}^2-3\tilde{\bA}_{i_2}^3],\label{eq:lower5}
\end{align}
where the last step applies $\Expect [\tilde{\bA}_{i_2}]=\frac{1}{3}\bI$. Since $\max_{0\leq x\leq 1} -\frac{13}{9}x +4x^2-3x^3=0$ and the eigenvalues of $\tilde{\bA}_i$ lie in $[0,1]$, we have
\[
-\frac{13}{9}\tilde{\bA}_{i_2}+4\tilde{\bA}_{i_2}^2-3\tilde{\bA}_{i_2}^3\leq \mathbf{0}.
\]
Combining it with \eqref{eq:lower3}, \eqref{eq:lower4} and \eqref{eq:lower5}, \eqref{eq:lower2} is proved and therefore the lower bound in \eqref{eq:main_alternate} is proved.

\subsection{Proof of Lemma~\ref{lemma:lemma1}}\label{sec:lemma1}
The difference of its LHS and RHS can be written as the product of a matrix with its transpose:
\[\bA\bC\bA+\bB\bC\bB - \bA\bC\bB - \bB\bC\bA
= (\bA-\bB)\bC(\bA-\bB)=\Big((\bA-\bB)\bC^{0.5}\Big) \Big((\bA-\bB)\bC^{0.5}\Big)^T,
\]
which is clearly positive semidefinite.
\subsection{Proof of Lemma~\ref{lemma:lemma2}}\label{sec:lemma2}
Since $\bB-\bA$ is positive semidefinite, we can assume $\bB-\bA=\bH\bH^T$ for some matrix $\bH$. Then
$
\bC\bB\bC-\bC\bA\bC=\bC(\bB-\bA)\bC=(\bC\bH)(\bC\bH)^T$ is also positive semidefinite.

\section{Generalization to $n=3k$}\label{sec:generalization}
It is possible to extend the proof from the case $n=m=3$ to the  cases where $m=3$ and $n=3k$ for any positive integer $k$. The proof follows directly from the following lemma.
\begin{lemma}\label{thm:generalization}
If \eqref{eq:main_alternate} holds for $(n,m)=(n_0,m_0)$, then it also holds for $(n,m)=(k\,n_0,m_0)$ with any positive integer $k$.
\end{lemma}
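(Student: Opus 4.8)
The plan is to prove the implication directly. Assume \eqref{eq:main_alternate} holds for $(n_0,m_0)$, i.e.\ $-\frac1{n_0^{m_0}}\bI\le\Expect[\bB_{i_1}\cdots\bB_{i_{m_0}}]\le\frac1{n_0^{m_0}}\bI$ whenever $\bB_1,\dots,\bB_{n_0}$ are positive definite with $\sum_{\ell}\bB_\ell\le\bI$ and $(i_1,\dots,i_{n_0})$ is a uniformly random permutation of $\{1,\dots,n_0\}$. Let $\bA_1,\dots,\bA_{kn_0}$ be positive definite with $\sum_{j=1}^{kn_0}\bA_j\le\bI$, as in the reduction above, and let $(i_1,\dots,i_{kn_0})$ be a uniformly random permutation of $\{1,\dots,kn_0\}$; only the first $m_0$ coordinates enter \eqref{eq:main_alternate}, and we may assume $m_0\le n_0$ since otherwise the statement is vacuous. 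We must show $-\frac1{(kn_0)^{m_0}}\bI\le\Expect[\bA_{i_1}\cdots\bA_{i_{m_0}}]\le\frac1{(kn_0)^{m_0}}\bI$.

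The idea is to generate $(i_1,\dots,i_{m_0})$ in two stages that expose a block structure of size $k$: first draw a uniformly random \emph{ordered} partition $P=(P_1,\dots,P_{n_0})$ of $\{1,\dots,kn_0\}$ into $n_0$ blocks of size $k$; independently draw a uniformly random ordered tuple $(b_1,\dots,b_{m_0})$ of distinct elements of $\{1,\dots,n_0\}$; and then, independently for each $t$, draw $a_t$ uniformly from $P_{b_t}$. The first thing to verify is that $(a_1,\dots,a_{m_0})$ has the same law as $(i_1,\dots,i_{m_0})$. Indeed the $a_t$ are distinct because the $b_t$ are and the blocks $P_{b_1},\dots,P_{b_{m_0}}$ are then disjoint; and the construction is equivariant under any relabeling $\sigma\in S_{kn_0}$ of the ground set, because $\sigma$ sends a uniformly random ordered partition to a uniformly random ordered partition and commutes with the remaining steps. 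Hence the law of $(a_1,\dots,a_{m_0})$ is $S_{kn_0}$-invariant, and since $S_{kn_0}$ acts transitively on ordered $m_0$-tuples of distinct elements, that law is uniform on them — which is exactly the law of $(i_1,\dots,i_{m_0})$. I expect this exchangeability bookkeeping to be the only genuine obstacle; note that it uses the randomness of $P$ in an essential way (a fixed partition would not produce tuples with two entries in the same block).

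Granting this, condition on $P$ and $b$; the $a_t$ are then independent, so
\[
\Expect\big[\bA_{a_1}\cdots\bA_{a_{m_0}}\ \big|\ P,b\big]
=\prod_{t=1}^{m_0}\Big(\tfrac1k\sum_{j\in P_{b_t}}\bA_j\Big)
=\frac1{k^{m_0}}\,\bB_{b_1}\cdots\bB_{b_{m_0}},\qquad \bB_\ell:=\sum_{j\in P_\ell}\bA_j.
\]
For each fixed $P$ the matrices $\bB_1,\dots,\bB_{n_0}$ are positive definite with $\sum_{\ell=1}^{n_0}\bB_\ell=\sum_{j=1}^{kn_0}\bA_j\le\bI$, and $(b_1,\dots,b_{m_0})$ is the first $m_0$ coordinates of a uniformly random permutation of $\{1,\dots,n_0\}$, so the assumed case $(n_0,m_0)$ bounds $\Expect_b[\bB_{b_1}\cdots\bB_{b_{m_0}}]$ between $\pm\frac1{n_0^{m_0}}\bI$, a bound that does not depend on $P$. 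Averaging the display over $b$ and then over $P$, and using that $(a_1,\dots,a_{m_0})$ and $(i_1,\dots,i_{m_0})$ have the same law, we obtain
\[
-\frac1{(kn_0)^{m_0}}\bI\ \le\ \Expect\big[\bA_{i_1}\cdots\bA_{i_{m_0}}\big]\ \le\ \frac1{(kn_0)^{m_0}}\bI,
\]
which is \eqref{eq:main_alternate} for $(n,m)=(kn_0,m_0)$. Note that the semidefinite order enters only through the assumed inequality for $(n_0,m_0)$ together with the elementary fact that conditional independence turns the expectation of a product into the product of expectations, so neither Lemma~\ref{lemma:lemma1} nor Lemma~\ref{lemma:lemma2} is needed in this step.
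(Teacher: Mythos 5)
Your proof is correct and is essentially the paper's argument: both partition the $kn_0$ matrices into $n_0$ blocks of size $k$, reduce $\Expect[\bA_{i_1}\cdots\bA_{i_{m_0}}]$ to $\frac{1}{k^{m_0}}$ times the expectation of a product of block sums over a random choice of $m_0$ distinct blocks, and then invoke the $(n_0,m_0)$ case. The only difference is presentational — you make explicit the exchangeability/coupling step that the paper compresses into its first displayed identity.
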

\begin{proof}
If $n=kn_0$ and $m=m_0$, then
\begin{align}
&\Expect [ \bA_{i_1}\bA_{i_2}\cdots\bA_{i_m} ]
 = \frac{1}{k^m} \Expect \Big[\sum_{j=1}^k\bA_{i_j}\cdot\sum_{j=k+1}^{2k}\bA_{i_j}\cdot\sum_{j=2k+1}^{3k}\bA_{i_j}\cdots \sum_{j=(m-1)k+1}^{mk}\bA_{i_j}\Big]\nonumber\\
&= \frac{1}{k^m} \Expect \Big[\Expect_{l_1,l_2,\cdots,l_{n_0}}\Big[\sum_{j=(l_1-1)k+1}^{l_1k}\!\!\!\!\!\!\bA_{i_j}\cdot\!\!\!\!\!\!\sum_{j=(l_2-1)k+1}^{l_2k}\!\!\!\!\!\!\bA_{i_j}\cdot\!\!\!\!\!\!\sum_{j=(l_3-1)k+1}^{l_3k}\!\!\!\!\!\!\bA_{i_j}\cdots \!\!\!\!\!\!\sum_{j=(l_m-1)k+1}^{l_mk}\!\!\!\!\!\!\bA_{i_j}\Big]\Big],\label{eq:generalize1}
\end{align}
where $\{l_1,l_2,\cdots,l_{n_0}\}$ is a random permutation of $\{1,2,\cdots,n_0\}$.

 Apply \eqref{eq:main_alternate} with $(n,m)=(n_0,m_0)$  to $n_0$ positive definite matrices $\{\sum_{j=(l-1)k+1}^{lk}\!\bA_{i_j}\}_{l=1}^{n_0}$, we have
\begin{equation}
-\frac{1}{n_0^m}\bI\leq \Expect_{l_1,l_2,\cdots,l_{n_0}}\Big[\sum_{j=(l_1-1)k+1}^{l_1k}\!\!\!\!\!\!\bA_{i_j}\cdot\!\!\!\!\!\!\sum_{j=(l_2-1)k+1}^{l_2k}\!\!\!\!\!\!\bA_{i_j}\cdot\!\!\!\!\!\!\sum_{j=(l_3-1)k+1}^{l_3k}\!\!\!\!\!\!\bA_{i_j}\cdots \!\!\!\!\!\!\sum_{j=(l_m-1)k+1}^{l_mk}\!\!\!\!\!\!\bA_{i_j}\Big]\leq \frac{1}{n_0^m}\bI\label{eq:generalize2}.
\end{equation}
Combining \eqref{eq:generalize1} and \eqref{eq:generalize2}, we proved \eqref{eq:main_alternate} for $(n,m)=(k\,n_0,m_0)$.
\end{proof}

We remark that since the conjecture for $(n,m)=(2,2)$ has been proved in \cite[Proposition 3.2]{Recht12beneath}, Lemma~\ref{thm:generalization} also implies that the conjecture when $(n,m)=(2k,2)$, i.e., when $n$ is even and $m=2$.

\section{Discussion}\label{sec:conjecture}
Having proved the case $n=m=3$, a natural next step is to prove the case $n=m=4$. The basic idea in this paper is to divide $\sum_{\substack{j_1,\ldots,j_m=1,\\\text{$j_1, \ldots, j_m$ all distinct}}}^{n}\bA_{j_1}\bA_{j_2}\cdots\bA_{j_m}$ into several parts and find the bounds of each parts. For example, in Section~\ref{sec:upper}, we prove the upper bound in \eqref{eq:main_alternate} by finding the upper bounds of $\bA_2\bA_1\bA_3+\bA_3\bA_1\bA_2$, $\bA_1\bA_2\bA_3+\bA_3\bA_2\bA_1$ and $\bA_2\bA_3\bA_1+\bA_1\bA_3\bA_2$ separately. Based on this idea, we have the following conjecture, which would implies \eqref{eq:conjecture} when $n,m=4$.
\begin{conjecture}
For positive semi-definite matrices $\bA$, $\bB$, $\bC$, and $\bD$, the following inequality holds:
\[
\|\bA(\bB\bC+\bC\bB)\bD+\bD(\bB\bC+\bC\bB)\bA\|\leq \frac{1}{64}\|\bA+\bB+\bC+\bD\|^4.
\]
\end{conjecture}
While simulations seem to imply the correctness of this conjecture, the difficulty lies in the construction of an equivalent version of Lemma~\ref{lemma:lemma1} for the product of four matrices.

Another interesting open conjecture is to extend the case $n=m=3$ to other unitarily invariant norms. That is, can we prove that for all positive definite matrices $\bA$, $\bB$ and $\bC$, the following inequality holds:
\[
9 \vertiii{\bA\bB\bC+\bA\bC\bB+\bB\bA\bC+\bB\bC\bA+\bC\bA\bB+\bC\bB\bA}\leq 2\vertiii{\bA+\bB+\bC}^3,
\]
where $\vertiii{.}$ denotes every unitarily invariant norm? While the simulations seem to indicate its correctness, and the approach in this paper only applies to the operator norm.


\section*{References}
  \bibliographystyle{elsarticle-num}
  \bibliography{inequality}





\end{document}